\documentclass[11pt]{article}

\usepackage[a4paper,margin=30mm]{geometry}
\usepackage{amsmath,amssymb,amsthm,mathtools}
\usepackage{microtype}
\usepackage{booktabs}
\usepackage{enumitem}
\usepackage{xcolor}
\usepackage{hyperref}
\usepackage[nameinlink,capitalize,noabbrev]{cleveref}

\hypersetup{
  colorlinks=true,
  linkcolor=blue!55!black,
  citecolor=blue!55!black,
  urlcolor=blue!55!black
}

\newcommand{\PP}{\mathbb P}
\newcommand{\kk}{\Bbbk}
\newcommand{\MM}{\mathfrak m}
\newcommand{\reg}{\operatorname{reg}}
\newcommand{\sat}{\operatorname{sat}}
\newcommand{\whalpha}{\widehat\alpha}

\theoremstyle{plain}
\newtheorem{theorem}{Theorem}[section]
\newtheorem{proposition}[theorem]{Proposition}
\newtheorem{lemma}[theorem]{Lemma}

\theoremstyle{definition}
\newtheorem{definition}[theorem]{Definition}

\theoremstyle{remark}
\newtheorem{remark}[theorem]{Remark}

\title{From Demailly's Inequality to\\ Harbourne--Huneke Containments for General Points}
\author{Grzegorz Malara}
\date{October 2026}

\begin{document}
\maketitle

\begin{abstract}
Let $I$ be the defining ideal of a general set of $s$ points in projective $N$-space over an algebraically closed field. Harbourne and Huneke conjectured the strengthened symbolic containment
$$
I^{(Nr)}\subseteq \MM^{r(N-1)}I^r
$$
for every $r\ge 1$. For general points this was known in low dimension and for small numbers of points, while in arbitrary dimension and arbitrary cardinality the best general result was the stable version, valid on one dense Zariski-open set for all sufficiently large $r$.

We show that the recent proof of Demailly's conjecture for arbitrary finite point sets by H\`a and Sivakumar removes the stability threshold. The key observation is that the strict generic Waldschmidt estimate of Bisui and Nguy\~en allows one to select a single finite symbolic level $m_0$. Equality of the corresponding interpolation number is a Zariski-open condition. Demailly's inequality then propagates this one finite condition to lower bounds for every symbolic power, and a standard regularity criterion converts those bounds into the Harbourne--Huneke containment for every $r$. Consequently, for every $N\ge2$ and every $s\ge1$, one dense Zariski-open family of $s$-point configurations in $\PP^N$ satisfies the Harbourne--Huneke containment simultaneously for all $r\ge1$.

The argument also clarifies a quantifier issue in the earlier literature: before the full Demailly theorem, estimates on the generic fibre naturally yielded either all $r$ on a very general set, or one open set only for $r\gg0$. The new theorem replaces infinitely many symbolic conditions by a single finite interpolation condition.
\end{abstract}

\section{Introduction}

Let $\kk$ be an algebraically closed field and let
$$
R=\kk[x_0,\ldots,x_N],\qquad \MM=(x_0,\ldots,x_N).
$$
If $X=\{P_1,\ldots,P_s\}\subset \PP^N_\kk$ is a finite set of distinct points, we write
$$
I=I(X)=I(P_1)\cap\cdots\cap I(P_s)
$$
for its radical homogeneous ideal. Its $m$-th symbolic power is
$$
I^{(m)}=I(P_1)^m\cap\cdots\cap I(P_s)^m,
$$
and $\alpha(J)$ denotes the initial degree of a nonzero homogeneous ideal $J$. The Waldschmidt constant is
$$
\whalpha(I)=\lim_{m\to\infty}\frac{\alpha(I^{(m)})}{m}
            =\inf_{m\ge1}\frac{\alpha(I^{(m)})}{m}.
$$

The uniform containment theorem of Ein--Lazarsfeld--Smith and Hochster--Huneke gives
$$
I^{(Nr)}\subseteq I^r
$$
for all $r\ge1$. Harbourne and Huneke proposed the stronger containment
\begin{equation}\tag{HH}\label{eq:HH}
I^{(Nr)}\subseteq \MM^{r(N-1)}I^r.
\end{equation}
which incorporates the extra homogeneous degree forced by the codimension of a point; see \cite{HH13}. This containment immediately implies Chudnovsky's lower bound for the Waldschmidt constant.

The history of \eqref{eq:HH} for general points is closely intertwined with Chudnovsky's and Demailly's conjectures. Harbourne--Huneke proved \eqref{eq:HH} for general points in $\PP^2$ \cite{HH13}; Dumnicki proved it for any number of general points in $\PP^3$, and also for at most $N+1$ points in general position in $\PP^N$ \cite{Dum12}. Nagel--Trok subsequently proved \eqref{eq:HH} for the remaining general-position configurations with at most $N+3$ points \cite[Proposition~5.4]{NT19}. For larger sets, Dumnicki--Tutaj-Gasi\'nska and later Bisui--Grifo--H\`a--Nguy\~en developed specialization and asymptotic methods, culminating in the theorem of Bisui--Nguy\~en that \eqref{eq:HH} holds \emph{stably} for any number of general points in any projective dimension \cite{DTG17,BGHN22,BN24}. More precisely, for fixed $N,s$ they obtain one dense open set and a number $r(N,s)$ such that \eqref{eq:HH} holds for every configuration in that open set and every $r\ge r(N,s)$ \cite[Theorem~5.2]{BN24}.

In August 2026, H\`a and Sivakumar proved Demailly's conjecture in full generality in characteristic zero \cite{HaSiv26} and in September 2020, Pokora and Szemberg completed the picture by providing a proof in finite characteristic \cite{PokSze26}: for every finite set of points in $\PP^N$ and every $m\ge1$,
\begin{equation}\tag{D}\label{eq:Demailly}
\whalpha(I)\ge
\frac{\alpha(I^{(m)})+N-1}{m+N-1}.
\end{equation}
Our purpose is to show that (D), combined with the strict generic Waldschmidt estimate already established by Bisui--Nguy\~en, upgrades the stable general-point result to all symbolic exponents.

The new part of the argument concerns the range
$$
N\ge4,\qquad s\ge N+4.
$$
In that range Bisui--Nguy\~en proved for the ideal $H$ of $s$ generic points that
\begin{equation}\label{eq:genericgap-intro}
N\whalpha(H)>\reg(H)+N-1.
\end{equation}
The strictness of this inequality is decisive. Since the finite Demailly quotients
$$
c_m(H)=\frac{\alpha(H^{(m)})+N-1}{m+N-1}
$$
converge to $\whalpha(H)$, one can choose a single $m_0$ for which
$$
Nc_{m_0}(H)>\reg(H)+N-1.
$$
The number $\alpha(H^{(m_0)})$ is finite interpolation data, hence it is preserved on one dense open family of specializations. Full Demailly then turns this one finite inequality into a lower bound for \emph{every} symbolic initial degree on every fibre in that open family. A standard regularity lemma yields \eqref{eq:HH} for all $r$.

Our main result is the following.

\begin{theorem}\label{thm:main}
Let $\kk$ be an algebraically closed field of arbitrary characteristic. For every $N\ge2$ and every $s\ge1$, there exists a nonempty dense Zariski-open subset $U$ of the parameter space of $s$ distinct points in $\PP^N_\kk$ such that for every $X\in U$, with defining ideal $I_X$, one has
$$
I_X^{(Nr)}\subseteq \MM^{r(N-1)}I_X^r
$$
for every integer $r\ge1$.
\end{theorem}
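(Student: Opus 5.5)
The proof splits into cases. The range $N\ge4$, $s\ge N+4$ carries the new argument. The remaining cases I handle by quoting the literature.

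For $N=2$ I use Harbourne--Huneke \cite{HH13}, and for $N=3$ I use Dumnicki \cite{Dum12}. When $s\le N+1$, points in general position form an open dense condition and \cite{Dum12} applies. When $s\in\{N+2,N+3\}$ I use \cite[Proposition~5.4]{NT19}. In each quoted case I must check that the published statement holds for all $r$ on an honest open dense set, for instance a general-position condition, and not merely on a very general set. If a source only gives the very general version, I fall back on the main argument below, provided the strict generic gap also holds there.

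The core case, $N\ge4$ and $s\ge N+4$, proceeds in four steps.

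\emph{Step 1 (choice of $m_0$).} Let $H$ be the ideal of $s$ generic points over $K=\kk(\text{coordinates})$. By \eqref{eq:genericgap-intro} and the convergence $c_m(H)\to\whalpha(H)$, fix $m_0$ with $Nc_{m_0}(H)>\reg(H)+N-1$.

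\emph{Step 2 (openness).} On a dense open $U$ of the configuration space I want two properties:
\begin{itemize}
\item $\alpha(I_X^{(m_0)})=\alpha(H^{(m_0)})$;
\item $\reg(I_X)=\reg(H)$, which equals the generic value, namely the least $t$ with $\binom{t-1+N}{N}\ge s$, attained by points with maximal Hilbert function.
\end{itemize}
Both conditions are open. The dimension of $(I_X^{(m_0)})_d$ is $\binom{d+N}{N}$ minus the rank of a matrix of derivative conditions whose entries are polynomial in the coordinates. By lower semicontinuity of rank, this dimension is minimal, hence equal to the generic value, on an open set, for each of the finitely many $d\le\alpha(H^{(m_0)})$. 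Maximal Hilbert function is likewise a rank condition.

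\emph{Step 3 (Demailly propagation).} For $X\in U$, inequality \eqref{eq:Demailly} gives $\whalpha(I_X)\ge c_{m_0}(I_X)=c_{m_0}(H)$. Hence for every $m$,
\[
\alpha(I_X^{(m)})\ge m\,\whalpha(I_X)\ge m\,c_{m_0}(H).
\]
With $m=Nr$ this yields $\alpha(I_X^{(Nr)})>r(\reg(I_X)+N-1)$.

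\emph{Step 4 (regularity criterion).} I use the standard lemma, with $\reg(I)$ the regularity of the ideal. If $\alpha(I^{(Nr)})\ge r\,\reg(I)+r(N-1)$, then
\[
I^{(Nr)}\subseteq I^{(Nr)}_{\ge r\reg(I)+r(N-1)}\subseteq I^r\cap\MM^{r\reg(I)+r(N-1)}.
\]
The second containment uses Ein--Lazarsfeld--Smith/Hochster--Huneke, $I^{(Nr)}\subseteq I^r$. Since $I$ is generated in degrees $\le\reg(I)$, the power $I^r$ agrees with $\MM^{r(N-1)}I^r$ in degrees $\ge r\reg(I)+r(N-1)$. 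Indeed, $\reg(I^r)\le r\reg(I)$ for ideals of points, so $I^r$ is generated in degrees $\le r\reg(I)$. Consequently every element of $I^r$ of degree $\ge r\reg(I)+r(N-1)$ lies in $\MM^{r(N-1)}I^r$. Concretely, I write such an element as $\sum f_ig_i$ with $g_i$ generators of $I^r$ and $\deg f_i\ge r(N-1)$.

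The main obstacle is Step 2, specifically making rigorous that the generic-point value $\alpha(H^{(m_0)})$ is attained on an open (not merely very general) set. This needs two facts. First, symbolic powers of the generic ideal specialize degreewise: the fibre of $H^{(m_0)}$ is $I_X^{(m_0)}$, dimensionally, on an open set. Second, the dimension count is upper semicontinuous in the right direction. Only one $m_0$ and finitely many degrees are involved, and that finiteness is what makes the set open rather than very general. I would argue directly with the fat-point interpolation matrix rather than through the generic fibre formalism.

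A minor second check is that the regularity lemma's generation bound $\reg(I^r)\le r\reg(I)$ holds in arbitrary characteristic for points. This is known via Chandler/Geramita--Gimigliano--Pitteloud for zero-dimensional schemes.
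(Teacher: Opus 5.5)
Your proposal is correct and follows the paper's proof essentially step for step. It uses the same case split and the same citations for $N\le3$ and $s\le N+3$. For $N\ge4$, $s\ge N+4$, it makes the same choice of a single $m_0$ from the strict Bisui--Nguy\~en gap, uses openness of $\alpha(I_X^{(m_0)})$ and $\reg(I_X)$ via finitely many rank conditions, and applies Demailly's inequality to propagate to all $Nr$.

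The only variation is in the regularity criterion. You obtain $I^{(Nr)}\subseteq I^r$ from Ein--Lazarsfeld--Smith/Hochster--Huneke rather than from the paper's saturation-degree argument. This is harmless and needs only the non-strict inequality. It also makes your ``second check'' on $\reg(I^r)\le r\reg(I)$ unnecessary, since generation of $I^r$ in degrees $\le r\reg(I)$ follows immediately from generation of $I$ in degrees $\le\reg(I)$.
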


Thus the Harbourne--Huneke containment holds for a general set of any number of points, with ``general'' in the strong sense that a \emph{single} Zariski-open set works simultaneously for all $r$.

The point of the proof is not merely to remove finitely many exceptional values of $r$ from a stable theorem. It resolves a genuine quantifier obstruction. A generic-fibre argument can often prove the containment for every $r$ over the function field, and specialization then produces, for every fixed $r$, an open set $U_r$ on which that one containment holds. The intersection $\bigcap_r U_r$ describes a very general set, but need not contain a Zariski-open subset. The stable results overcome this problem by propagating one finite containment to all sufficiently large $r$. The full Demailly theorem offers a different propagation mechanism: one finite symbolic \emph{initial degree} controls all symbolic initial degrees, and therefore all $r$, on one open set.

\section{General, very general, and generic points}

We briefly fix the quantifiers used below. Let
$$
\mathcal P_{N,s}=\{(P_1,\ldots,P_s)\in(\PP^N)^s:\ P_i\ne P_j\text{ for }i\ne j\}.
$$
This is an irreducible open subset of $(\PP^N)^s$. Since all properties considered here are symmetric in the points, one may equivalently work on the corresponding open subset of the symmetric product or of the Hilbert scheme.

\begin{definition}
A property of $s$-point configurations holds for \emph{general points} if it holds on a nonempty dense Zariski-open subset of $\mathcal P_{N,s}$. It holds for \emph{very general points} if it holds on a countable intersection of nonempty dense Zariski-open subsets. The \emph{generic points} are the universal ordered configuration over the function field of $\mathcal P_{N,s}$.
\end{definition}

This distinction is emphasized in \cite{FMX18,BN24}. It is particularly relevant for symbolic powers. For a fixed exponent $r$, a rank condition on a finite interpolation matrix is open. Therefore a result proved for the generic fibre usually specializes to one open set $U_r$. Requiring the result for every $r$ leads a priori to the countable intersection
$$
\bigcap_{r\ge1}U_r,
$$
which is a very general condition rather than a general one.

The status before the proof of full Demailly can be summarized schematically as follows.

\begin{center}
\begin{tabular}{@{}p{0.29\textwidth}p{0.42\textwidth}p{0.20\textwidth}@{}}
\toprule
Range & Containment statement & Reference \\
\midrule
General points in $\PP^2$ & \eqref{eq:HH} for every $r$ & \cite{HH13} \\
General points in $\PP^3$ & \eqref{eq:HH} for every $r$; also shifted refinements & \cite{Dum12,Dum15} \\
At most $N+1$ points in general position & \eqref{eq:HH} for every $r$ & \cite{Dum12} \\
$N+2$ and $N+3$ general-position points & \eqref{eq:HH} for every $r$ & \cite{NT19} \\
Sufficiently many general points & one open set, \eqref{eq:HH} for $r\gg0$ & \cite{BGHN22} \\
Any number of general points & one open set, \eqref{eq:HH} for $r\ge r(N,s)$ & \cite{BN24} \\
\bottomrule
\end{tabular}
\end{center}

The stable theorem in the last row is deliberately formulated with a uniform threshold on one open set. Bisui--Nguy\~en explicitly distinguish this from the very-general situation and complete the stable picture for all $N$ and $s$ \cite[Section~5]{BN24}.

\section{Two finite ingredients}

We record the two elementary finite statements that allow us to turn the asymptotic inequality into an ideal containment.

\subsection{A regularity criterion}

The following lemma is standard in the containment literature. We state the version from \cite[Lemma~4.2]{DST13}.

\begin{lemma}[degree-to-containment criterion]\label{lem:regcriterion}
Let $I\subset R$ be a homogeneous ideal with zero-dimensional support. If $q,r,k$ are nonnegative integers and
$$
\alpha(I^{(q)})>r\,\reg(I)+k,
$$
then
$$
I^{(q)}\subseteq \MM^kI^r.
$$
\end{lemma}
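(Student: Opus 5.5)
The plan is the standard regularity argument, in the form used in \cite{DST13}. Set $d=\reg(I)$. Since $I$ is saturated with zero-dimensional support, its Hilbert function agrees with the Hilbert polynomial in degrees $\ge d-1$. The first goal is the classical consequence that $I$ is generated in degrees $\le d$ and that, for $t\ge d$,
$$
I_t=\MM_{t-d}\,I_d .
$$
More generally, I will show that $I^r$ agrees with its saturation in degrees $\ge rd$, and that $(I^r)_t=\MM_{t-rd}(I^r)_{rd}$ for $t\ge rd$. To get this, I use that the Castelnuovo--Mumford regularity of $I^r$ is at most $r\reg(I)$. For ideals of dimension at most one, i.e.\ zero-dimensional schemes in $\PP^N$, this is a theorem of Chandler and of Geramita--Gimigliano--Pitteloud. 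Then $I^r$ is $rd$-regular, so it is generated in degrees $\le rd$, and $H^0_\MM(R/I^r)$ vanishes in degrees $\ge rd$. Hence $(I^r)_t=((I^r)^{\sat})_t$ for $t\ge rd$.

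Next, take a homogeneous $f\in I^{(q)}$ of degree $t$. By hypothesis $t\ge\alpha(I^{(q)})>rd+k$. The uniform containment is not needed here. Since $I^{(q)}\subseteq I^{(r)}$ when $q\ge r$, $f$ lies in $(I^r)^{\sat}=I^{(r)}$; this uses that the saturation of $I^r$ equals the intersection of the $\mathfrak p_i^r$, because $I$ is locally a complete intersection at each point. If instead $q<r$, I will get $q\ge r$ from the degree bound. Indeed $\alpha(I^{(q)})\le q\,\alpha(I)\le q d$, because $I^q\subseteq I^{(q)}$ and $\alpha(I)\le d$. So $qd>rd$ forces $q>r$, given $d\ge1$; the case $I=R$ is trivial. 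Since $t>rd$, the previous paragraph gives $f\in (I^r)_t$.

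It remains to extract the factor $\MM^k$. For $t\ge rd+k$ I have $(I^r)_t=\MM_{t-rd}(I^r)_{rd}$, and $t-rd\ge k$. Therefore $(I^r)_t\subseteq \MM^{k}\MM^{t-rd-k}I^r\subseteq \MM^kI^r$, which proves $I^{(q)}\subseteq\MM^kI^r$ degree by degree, since $I^{(q)}$ is homogeneous.

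I expect the main obstacle to be the input $\reg(I^r)\le r\reg(I)$, together with the identification of the saturation of $I^r$ with $I^{(r)}$. The latter is easy for reduced points. The former is the genuinely nontrivial result for zero-dimensional schemes, and I would cite Chandler's theorem rather than reprove it. The remaining steps are bookkeeping with the degree inequalities.
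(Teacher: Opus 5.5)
Your proof is correct and follows the same route as the paper. You force $q\ge r$ from $\alpha(I^{(q)})\le q\reg(I)$, pass to $I^{(r)}=\sat(I^r)$, and use $\reg(I^r)\le r\reg(I)$ twice: to identify $I^r$ with its saturation in degrees at least $r\reg(I)$, and to see that $I^r$ is generated in degrees at most $r\reg(I)$, which gives the factor $\MM^k$ degree by degree. The only difference is that you spell out what the paper leaves implicit, namely the attribution of $\reg(I^r)\le r\reg(I)$ to Chandler and Geramita--Gimigliano--Pitteloud and the identification $\sat(I^r)=\bigcap_i\mathfrak p_i^r$ for reduced points.
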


\begin{proof}
For completeness, we recall the argument. Since
$$
\alpha(I^{(q)})\le q\alpha(I)\le q\reg(I),
$$
the assumed inequality forces $q\ge r$, hence $I^{(q)}\subseteq I^{(r)}=\sat(I^r)$. For ideals with zero-dimensional support one has
$$
\operatorname{satdeg}(I^r)\le\reg(I^r)\le r\reg(I).
$$
Thus $I^{(r)}$ and $I^r$ agree in degrees above $r\reg(I)$. Moreover $I^r$ is generated in degrees at most $r\reg(I)$. Consequently every element of $I^{(q)}$ occurs only in degrees greater than $r\reg(I)+k$ and, in those degrees, lies in $\MM^k I^r$.
\end{proof}

The strict inequality in Lemma~\ref{lem:regcriterion} is convenient for us because the generic estimate of Bisui--Nguy\~en is strict.

\subsection{One symbolic initial degree is an open condition}

Let $H$ denote the ideal of the generic ordered configuration of $s$ points in $\PP^N$. We use the following elementary specialization fact.

\begin{lemma}\label{lem:openalpha}
Fix an integer $m\ge1$. There is a nonempty dense Zariski-open subset $U_m\subset\mathcal P_{N,s}$ such that, for every $X\in U_m$,
$$
\alpha(I_X^{(m)})=\alpha(H^{(m)}).
$$
There is also a nonempty dense open subset $U_{\rm reg}$ on which
$$
\reg(I_X)=\reg(H).
$$
\end{lemma}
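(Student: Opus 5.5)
The approach is to express both invariants through ranks of finite matrices whose entries are polynomials in the coordinates of the points, and then use lower semicontinuity of rank. Work first on the affine charts of $\mathcal P_{N,s}$, or equivalently with homogeneous coordinates $a_{ij}$ of $P_i$. For fixed degree $d$ and multiplicity $m$, a form $F$ of degree $d$ lies in $I_X^{(m)}=\bigcap_i I(P_i)^m$ exactly when all partial derivatives of $F$ of order $m-1$ vanish at every $P_i$. In positive characteristic one uses Hasse derivatives instead; equivalently, one expands $F$ in local coordinates at $P_i$ and asks that all coefficients of degree $<m$ vanish. This is a linear system in the $\binom{N+d}{N}$ coefficients of $F$. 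Its matrix $M_{d,m}(X)$ has entries that are polynomials in the $a_{ij}$, and the same matrix with the $a_{ij}$ read as indeterminates gives $M_{d,m}$ for $H$ over the function field $K=\kk(\mathcal P_{N,s})$. Hence
$$\dim_\kk (I_X^{(m)})_d=\binom{N+d}{N}-\operatorname{rank}M_{d,m}(X),$$
and likewise over $K$ for $H$.

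For the first claim, set $d_0=\alpha(H^{(m)})$, which is at most $sm$. For each $d<d_0$ the generic matrix $M_{d,m}$ has full column rank $\binom{N+d}{N}$. Some maximal minor is therefore a nonzero polynomial, and its nonvanishing locus is a dense open set $V_d$. Let $U_m=\bigcap_{d<d_0}V_d$, a finite intersection of dense opens in the irreducible space $\mathcal P_{N,s}$. On $U_m$ we have $(I_X^{(m)})_d=0$ for every $d<d_0$, so $\alpha(I_X^{(m)})\ge d_0$.

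The reverse inequality needs a separate argument, because semicontinuity only bounds the rank from above at special points. Since $\operatorname{rank}M_{d_0,m}(X)\le\operatorname{rank}M_{d_0,m}$ for every $X$, the dimension of $(I_X^{(m)})_{d_0}$ is at least its generic value, which is positive. So $\alpha(I_X^{(m)})\le d_0$ holds everywhere, and equality holds on $U_m$.

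For regularity, use that $R/I_X$ is one-dimensional and Cohen--Macaulay. Then $\reg(I_X)$ is one more than the least $t$ with Hilbert function $h_X(t)=s$, i.e. points imposing independent conditions in degree $t$. Here $h_X(t)$ is the rank of the evaluation matrix $M_{t,1}(X)$. Let $t_0=\reg(H)-1$. At $t_0$ the generic rank is $s$, so the locus where $h_X(t_0)=s$ is a dense open $W_1$; this gives $\reg(I_X)\le\reg(H)$ there. For $t<t_0$ the generic rank is $<s$, and lower semicontinuity gives $\operatorname{rank}M_{t,1}(X)<s$ for every $X$. Hence $\reg(I_X)\ge\reg(H)$ holds everywhere, and we may take $U_{\rm reg}=W_1$.

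The main obstacle is to justify that $\alpha(H^{(m)})$ and $\reg(H)$ computed over $K$ agree with the rank of the matrices read over $K$. This amounts to checking that the derivative description of $I(P)^m$ commutes with base change from $\kk$ to $K$ and to the residue fields at closed points. I would handle it through the Hasse-derivative (Taylor expansion) description, which is characteristic-free and defined over $\mathbb Z[a_{ij}]$.
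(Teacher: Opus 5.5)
Your proposal is correct and is essentially the paper's own argument: the jet-evaluation matrix with polynomial entries in the point coordinates, lower semicontinuity of its rank, a finite intersection of open sets, and regularity read off from the Hilbert function $h_X(t)=\operatorname{rank}M_{t,1}(X)$. Your observation that $\alpha(I_X^{(m)})\le\alpha(H^{(m)})$ and $\reg(I_X)\ge\reg(H)$ hold on all of $\mathcal P_{N,s}$ makes explicit what the paper leaves implicit. The one slip is your first membership test, using only derivatives of order exactly $m-1$: it fails for $d<m-1$, and in characteristic $p$ already for $F=x_0^p$ with $m=2$. The Taylor-expansion condition you state next, that all coefficients of degree $<m$ vanish, is the correct one and is what the paper uses.
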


\begin{proof}
For each degree $d$, the vector space $(I_X^{(m)})_d$ is the kernel of the jet-evaluation map that sends a degree-$d$ form to its derivatives of order $<m$ at the $s$ points. After choosing affine charts locally on $\mathcal P_{N,s}$, this is a matrix whose entries are regular functions in the point coordinates. Its rank is maximal at the generic point and therefore constant on a nonempty open subset. Taking the finite intersection of these open sets for all degrees up to $\alpha(H^{(m)})$ gives the first assertion.

The second assertion follows in the same way from the generic Hilbert function, or from the usual semicontinuity of cohomology in the universal family of reduced points. Only finitely many degrees up to the generic regularity are involved.
\end{proof}

\begin{remark}\label{rem:noinfinite}
The finiteness in Lemma~\ref{lem:openalpha} is crucial. There is in general no reason that one can choose one open set on which $\alpha(I_X^{(m)})$ agrees with the generic value simultaneously for every $m$. Doing so by intersecting the individual open sets produces a very general condition. Our argument below deliberately chooses one symbolic level before specializing.
\end{remark}

\section{The Demailly upgrade principle}

For a homogeneous radical point ideal $J\subset R$, define
$$
c_m(J)=\frac{\alpha(J^{(m)})+N-1}{m+N-1}.
$$
Since $\alpha(J^{(m)})/m\to\whalpha(J)$, one has
$$
\lim_{m\to\infty}c_m(J)=\whalpha(J).
$$
The theorems of H\`a--Sivakumar and Pokora-Szemberg say that, over an algebraically closed field,
$$
c_m(J)\le\whalpha(J)
$$
for every $m\ge1$ \cite[Theorem~1.2]{HaSiv26}.

The following proposition isolates the mechanism that will be applied to general points.

\begin{proposition}[finite-open upgrade]\label{prop:upgrade}
Let $H$ be the defining ideal of $s$ generic points in $\PP^N$ over base field of arbitrary characteristic. Assume
\begin{equation}\label{eq:genericgap}
N\whalpha(H)>\reg(H)+N-1.
\end{equation}
Then there exists a nonempty dense Zariski-open subset $U\subset\mathcal P_{N,s}$ such that, for every $X\in U$ and every $r\ge1$,
$$
I_X^{(Nr)}\subseteq \MM^{r(N-1)}I_X^r.
$$
\end{proposition}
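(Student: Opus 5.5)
The plan is to turn the strict asymptotic gap \eqref{eq:genericgap} into a single finite interpolation condition, specialize that condition to an open set, and then use Demailly's inequality \eqref{eq:Demailly} on each fibre to recover lower bounds for all symbolic powers.

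First, since $c_m(H)\to\whalpha(H)$ and \eqref{eq:genericgap} is strict, I fix one $m_0\ge1$ with
$$
N c_{m_0}(H)>\reg(H)+N-1 .
$$
Next I set $U=U_{m_0}\cap U_{\rm reg}$, using Lemma~\ref{lem:openalpha}. This is a finite intersection of nonempty dense open subsets of the irreducible space $\mathcal P_{N,s}$, so it is again nonempty, dense and open. For every $X\in U$ we then have $\alpha(I_X^{(m_0)})=\alpha(H^{(m_0)})$, hence $c_{m_0}(I_X)=c_{m_0}(H)$, and also $\reg(I_X)=\reg(H)$. Consequently
$$
N c_{m_0}(I_X)>\reg(I_X)+N-1 .
$$

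Now fix $X\in U$. The Demailly inequality \eqref{eq:Demailly} of H\`a--Sivakumar and Pokora--Szemberg holds for the actual finite point set $X$ over $\kk$, so $\whalpha(I_X)\ge c_{m_0}(I_X)$. Since $\whalpha$ is the infimum of $\alpha(I_X^{(m)})/m$, for every $r\ge1$ this gives
$$
\alpha(I_X^{(Nr)})\ge Nr\,\whalpha(I_X)\ge r\,N c_{m_0}(I_X)>r\bigl(\reg(I_X)+N-1\bigr)=r\reg(I_X)+r(N-1).
$$
Applying Lemma~\ref{lem:regcriterion} with $q=Nr$ and $k=r(N-1)$ yields $I_X^{(Nr)}\subseteq\MM^{r(N-1)}I_X^r$ for all $r\ge1$ at once. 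The open set $U$ does not depend on $r$, which is exactly what is claimed.

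I expect the main obstacle to be the specialization step, namely justifying that $c_{m_0}$ and $\reg$ transfer from the generic fibre to $U$. This is exactly what Lemma~\ref{lem:openalpha} provides, so I would lean on it directly. One point needs checking: the equality $\alpha(I_X^{(m_0)})=\alpha(H^{(m_0)})$ should hold, not merely an inequality. If only one direction were available, the direction needed here is $\alpha(I_X^{(m_0)})\ge\alpha(H^{(m_0)})$. It follows from the lower semicontinuity of the rank of the jet-evaluation matrix in each degree below $\alpha(H^{(m_0)})$: full injectivity holds generically, hence on an open set. For the regularity, the direction needed is $\reg(I_X)\le\reg(H)$, which comes from the same maximal-rank argument applied to the Hilbert function. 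Beyond this, the argument needs no asymptotic information on the fibres. Strictness is used twice: once to choose $m_0$, and once to get the strict hypothesis of Lemma~\ref{lem:regcriterion}.
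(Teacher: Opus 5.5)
Your proposal is correct and follows the paper's proof essentially step for step. You pick $m_0$ from the strict gap and specialize $\alpha(H^{(m_0)})$ and $\reg(H)$ to $U_{m_0}\cap U_{\rm reg}$ via Lemma~\ref{lem:openalpha}, then apply Demailly on each fibre and conclude with Lemma~\ref{lem:regcriterion} at $q=Nr$, $k=r(N-1)$. Your remark that only $\alpha(I_X^{(m_0)})\ge\alpha(H^{(m_0)})$ and $\reg(I_X)\le\reg(H)$ are needed, both of which are open maximal-rank conditions, is a correct minor sharpening of the equalities the paper uses.
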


\begin{proof}
Set $R_0=\reg(H)$. Since $c_m(H)\to\whalpha(H)$ and the inequality in \eqref{eq:genericgap} is strict, there is an integer $m_0\ge1$ such that
\begin{equation}\label{eq:finitegap}
Nc_{m_0}(H)>R_0+N-1.
\end{equation}
By Lemma~\ref{lem:openalpha}, after replacing $\mathcal P_{N,s}$ by a nonempty dense open subset $U$, we may assume simultaneously that
\begin{equation}\label{eq:openconditions}
\alpha(I_X^{(m_0)})=\alpha(H^{(m_0)})
\quad\text{and}\quad
\reg(I_X)=R_0.
\end{equation}
for every $X\in U$. In particular,
$$
c_{m_0}(I_X)=c_{m_0}(H).
$$

Fix $X\in U$ and write $I=I_X$. The full Demailly inequality of H\`a--Sivakumar gives
\begin{equation}\label{eq:demaillyfiber}
\whalpha(I)\ge c_{m_0}(I)=c_{m_0}(H).
\end{equation}
Since $\whalpha(I)=\inf_q\alpha(I^{(q)})/q$, for every $r\ge1$ we have
$$
\alpha(I^{(Nr)})\ge Nr\whalpha(I)
\ge Nr c_{m_0}(H).
$$
Using \eqref{eq:finitegap} and \eqref{eq:openconditions}, this yields
$$
\alpha(I^{(Nr)})
>r(R_0+N-1)
=r\reg(I)+r(N-1).
$$
Applying Lemma~\ref{lem:regcriterion} with $q=Nr$ and $k=r(N-1)$ gives
$$
I^{(Nr)}\subseteq \MM^{r(N-1)}I^r.
$$
The open set $U$ was chosen before $r$ and is independent of it, so the containment holds there for all $r\ge1$ simultaneously.
\end{proof}

\begin{remark}\label{rem:roleDemailly}
For the generic ideal $H$ itself, the strict inequality \eqref{eq:genericgap} already gives
$$
\alpha(H^{(Nr)})\ge Nr\whalpha(H)>r\bigl(\reg(H)+N-1\bigr),
$$
and hence the desired containment for every $r$. Thus the new role of full Demailly is not to prove the all-$r$ statement on the generic fibre. Its role is to make the passage from the generic fibre to \emph{general} fibres finite: one fixed value $\alpha(H^{(m_0)})$ is enough to control every symbolic exponent after specialization.
\end{remark}

\section{General points in arbitrary dimension}

We now combine Proposition~\ref{prop:upgrade} with the strict Waldschmidt estimate of Bisui--Nguy\~en.

\begin{theorem}\label{thm:newrange}
Let $\kk$ be an algebraically closed field. Suppose
$$
N\ge4\qquad\text{and}\qquad s\ge N+4.
$$
Then a general set $X$ of $s$ points in $\PP^N_\kk$ satisfies
$$
I_X^{(Nr)}\subseteq \MM^{r(N-1)}I_X^r
$$
for every $r\ge1$, simultaneously on one dense Zariski-open set of configurations.
\end{theorem}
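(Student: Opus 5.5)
The plan is to deduce Theorem~\ref{thm:newrange} directly from Proposition~\ref{prop:upgrade}. The only hypothesis to verify is the strict generic inequality \eqref{eq:genericgap},
$$
N\whalpha(H)>\reg(H)+N-1,
$$
for the ideal $H$ of $s$ generic points in $\PP^N$ with $N\ge4$ and $s\ge N+4$. This is precisely the estimate \eqref{eq:genericgap-intro} of Bisui--Nguy\~en quoted in the introduction. I will therefore cite \cite{BN24} for it, checking that their statement is for the generic configuration over the function field of $\mathcal P_{N,s}$ (or for very general points, which gives the same numerical invariants as the generic fibre). Once this is in place, Proposition~\ref{prop:upgrade} immediately provides a single dense open $U\subset\mathcal P_{N,s}$ on which \eqref{eq:HH} holds for all $r\ge1$.

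If I need to make the cited inequality explicit, I will recall its two halves. First, the generic regularity is $\reg(H)=d+1$, where $d$ is the least degree with $\binom{d+N}{N}\ge s$; this holds because generic points impose independent conditions. Second, Bisui--Nguy\~en give a lower bound for $\whalpha(H)$, obtained by specialization or by Dumnicki--Tutaj-Gasi\'nska type bounds, which strictly exceeds $(d+N)/N$ in this range. The hypotheses $N\ge4$ and $s\ge N+4$ are exactly where their argument gives strictness; the smaller cases are handled elsewhere in the paper by the cited results of Harbourne--Huneke, Dumnicki and Nagel--Trok.

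One bookkeeping point remains: the base field. Proposition~\ref{prop:upgrade} is stated in arbitrary characteristic, since it uses both \cite{HaSiv26} and \cite{PokSze26}. So I must confirm that the Bisui--Nguy\~en strict estimate does not depend on the characteristic. If their proof is written in characteristic zero, I would argue as follows. The quantities $\reg(H)$ and $\alpha(H^{(m)})$ are ranks of integer jet-evaluation matrices over $\mathbb Z[\text{coordinates}]$. The regularity is characteristic-independent because the Hilbert function of generic points is always maximal. For the Waldschmidt constant, the rank of an integer matrix can only drop in positive characteristic, so symbolic initial degrees can only increase. Hence $\whalpha(H)$ over a field of characteristic $p$ is at least its characteristic-zero value, and the strict inequality persists.

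I expect this characteristic transfer to be the main obstacle, specifically the monotonicity of $\alpha(H^{(m)})$ under reduction mod $p$. Rank dropping gives larger kernels, which would mean more forms and therefore smaller $\alpha$, so the naive direction is wrong. I would instead first try to check whether the Bisui--Nguy\~en lower bound is itself characteristic-free, since specialization arguments and counts of the expected dimension are typically characteristic-free. I would rely on that rather than on a semicontinuity argument.
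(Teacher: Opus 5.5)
Your proposal is correct and is essentially the paper's proof: the paper verifies hypothesis \eqref{eq:genericgap} of Proposition~\ref{prop:upgrade} by citing \cite[Theorem~4.10]{BN24} for $N\ge4$, $s\ge N+4$, and concludes from that proposition. The paper does not discuss the characteristic at all, so your mod-$p$ semicontinuity argument is not needed. You were right to discard it, since a rank drop under reduction can only lower $\alpha(H^{(m)})$. If you want the extra care, checking that the Bisui--Nguy\~en bound is itself characteristic-free, as you suggest, is the correct step.
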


\begin{proof}
Let $H$ be the ideal of $s$ generic points. Bisui--Nguy\~en prove
$$
\whalpha(H)>\frac{\reg(H)+N-1}{N}
$$
for $N\ge4$ and $s\ge N+4$ \cite[Theorem~4.10]{BN24}. This is exactly the hypothesis of Proposition~\ref{prop:upgrade}.
\end{proof}

We can now complete the parameter range by invoking the previously known all-$r$ results.

\begin{proof}[Proof of Theorem~\ref{thm:main}]
If $N=2$, the assertion is the theorem of Harbourne--Huneke for general planar points \cite[Proposition~3.10]{HH13}. If $N=3$, it is due to Dumnicki \cite{Dum12}.

Assume $N\ge4$. For $s\le N+1$, the containment for points in general position was proved by Dumnicki \cite{Dum12}. For $s=N+2$ and $s=N+3$, it follows from Nagel--Trok \cite[Proposition~5.4]{NT19}; their hypotheses are satisfied by a general configuration. Finally, for $s\ge N+4$, apply Theorem~\ref{thm:newrange}.
\end{proof}

\begin{remark}
Nagel--Trok actually prove their small-cardinality result for a broader class of linearly general configurations and determine the relevant symbolic initial degrees explicitly \cite[Sections~3--5]{NT19}. Thus there is no hidden specialization issue in the cases $s=N+2,N+3$.
\end{remark}

\section{Why the stable theorem did not automatically give the all-$r$ theorem}

It is useful to isolate the change in logical structure brought by the full Demailly theorem.

Bisui--Nguy\~en already had the strict generic inequality
$$
N\whalpha(H)>\reg(H)+N-1.
$$
As observed in Remark~\ref{rem:roleDemailly}, this implies the Harbourne--Huneke containment for every $r$ for the generic ideal $H$. If one specializes this conclusion separately for each exponent, one obtains open sets
$$
U_1,U_2,U_3,\ldots
$$
such that the $r$-th containment holds on $U_r$. This yields the simultaneous result on
$$
\bigcap_{r\ge1}U_r,
$$
which is a very general subset. In general there is no algebraic-geometric mechanism forcing this countable intersection to contain a Zariski-open subset.

The stable containment results of \cite{BGHN22,BN24} overcome this obstacle in another way. They first specialize a carefully chosen finite containment and then use resurgence-type propagation to obtain \eqref{eq:HH} for all $r$ larger than a uniform threshold. Thus they produce one open set, but retain the condition $r\gg0$.

The proof of Proposition~\ref{prop:upgrade} uses the new Demailly theorem as a different propagation principle. The chain of implications is
$$
\begin{gathered}
N\whalpha(H)>\reg(H)+N-1
\\[2mm]
\Downarrow
\\[-1mm]
\text{choose one finite }m_0\text{ with }
Nc_{m_0}(H)>\reg(H)+N-1
\\[2mm]
\Downarrow
\\[-1mm]
\text{specialize this one finite interpolation number on one open set}
\\[2mm]
\Downarrow\quad\text{full Demailly on every fibre}
\\[-1mm]
\alpha(I_X^{(Nr)})>r\bigl(\reg(I_X)+N-1\bigr)
\quad\text{for every }r
\\[2mm]
\Downarrow
\\[-1mm]
I_X^{(Nr)}\subseteq \MM^{r(N-1)}I_X^r
\quad\text{for every }r.
\end{gathered}
$$
In quantifier notation, the generic-specialization route naturally gives
$$
\forall r\;\exists U_r,
$$
while Proposition~\ref{prop:upgrade} gives
$$
\exists U\;\forall r.
$$
The possibility of passing from one finite $m_0$ to all $r$ is exactly what became available with the proof of Demailly's conjecture for arbitrary point configurations.

\section{Further remarks}

\subsection{Relation to the historical direction of implication}

The direction of the argument is the reverse of the one that motivated much of the containment literature. Harbourne--Huneke introduced strengthened containments in part because taking initial degrees and passing to a limit yields Chudnovsky-type inequalities \cite{HH13}. Later work sought generalized containments strong enough to imply Demailly's conjecture; see, for instance, \cite{BGHN22D}. Now that Demailly's inequality is known independently for arbitrary finite point sets, it can be fed back into the containment problem, provided one has enough regularity margin.

The proof above shows precisely what is lost when one passes from an ideal containment to its initial-degree consequence: regularity is needed to recover the ideal-theoretic statement. For general points the strict inequality of Bisui--Nguy\~en supplies the necessary margin.

\subsection{A reusable criterion}

The argument is not intrinsically tied to generic points. If $I$ is any radical ideal of finitely many points and there exists $m\ge1$ such that
$$
N\frac{\alpha(I^{(m)})+N-1}{m+N-1}>\reg(I)+N-1,
$$
then Demailly's theorem and Lemma~\ref{lem:regcriterion} give
$$
I^{(Nr)}\subseteq\MM^{r(N-1)}I^r
$$
for every $r\ge1$. Thus a single finite symbolic interpolation computation can certify an infinite family of containments. We leave systematic applications to special configurations for separate work.

\subsection{Shifted containments}

The same method gives effective stable ranges for the stronger shifted containment
$$
I^{(Nr-N+1)}\subseteq \MM^{(r-1)(N-1)}I^r,
$$
but a positive finite-level margin does not by itself eliminate all small values of $r$. Bisui--Nguy\~en prove the shifted version stably for general points \cite[Remark~5.3]{BN24}, while Dumnicki proves the all-$r$ statement in $\PP^3$ \cite{Dum15}. Whether the present method, combined with sharper finite interpolation data, gives an all-$r$ shifted theorem in higher dimensions is a natural question.

\section*{Acknowledgement}
The Author is deeply grateful to Tomasz Szemberg for inspiring conversations and for invaluable comments provided at every stage of this work. Special thanks go to the authors whose work on generic and general points made the short argument above possible. Furthermore, the present manuscript was prompted by the recent proof of Demailly's conjecture in full generality by H\`a and Sivakumar in characteristic zero, and by Pokora and Szemberg in finite characteristic.

\end{document}